\definecolor{main}{RGB}{234, 234, 234}
\definecolor{secondary}{RGB}{4, 30, 62}
\definecolor{splitColor}{RGB}{202, 200, 200}
\definecolor{classColor1}{RGB}{163, 31, 52}
\definecolor{classColor2}{RGB}{4, 30, 65}
\definecolor{classColor3}{RGB}{0, 105, 143}
\definecolor{classColor4}{RGB}{4, 30, 65}
\tikzset{%
  every neuron/.style={
    circle,
    draw,
    minimum size=.5cm
  },
  neuron missing/.style={
    draw=none,
    scale=4,
    text height=0.333cm,
    execute at begin node=\color{black}$\vdots$
  },
}
\tikzset{class1/.style={draw,circle,text=white, fill=classColor1!80}}
\tikzset{class2/.style={draw,circle,text=white, fill=classColor2!80}}
\tikzset{class3/.style={draw,circle,text=white, fill=classColor3!80}}
\tikzset{class4/.style={draw,circle,text=white, fill=classColor4!80}}
\tikzset{split/.style={draw,rectangle,fill=splitColor!20}}
\tikzset{level 1/.style={sibling distance=12em},
         level 2/.style={sibling distance=6em},
         level 3/.style={sibling distance=3em}
        }
\theoremstyle{plain}
\newtheorem{theorem}{Theorem}[section]
\theoremstyle{plain}
\theoremstyle{plain}
\newtheorem{lemma}[theorem]{Lemma}
\theoremstyle{plain}
\newtheorem{proposition}[theorem]{Proposition}
\theoremstyle{definition}
\theoremstyle{definition}
\newtheorem{assumption}[theorem]{Assumption}
\theoremstyle{definition}
\theoremstyle{definition}
\newtheorem{algo}[theorem]{Algorithm}
\theoremstyle{remark}
\numberwithin{equation}{section}
\newcommand{\iprod}[2]{#1^T #2}
\newcommand{\citet}[1]{\textcite{#1}}
\newcommand*\samethanks[1][\value{footnote}]{\footnotemark[#1]}
\title{Incremental cutting-plane method and its application}
\author{Nagisa Sugishita\thanks{School of Mathematics, University of Edinburgh, James Clerk Maxwell Building, Edinburgh (UK), EH9 3FD, Email: {\tt n.sugishita@sms.ed.ac.uk}, {\tt a.grothey@ed.ac.uk}, {\tt k.mckinnon@ed.ac.uk}}, Andreas Grothey\samethanks, Ken McKinnon\samethanks}
\begin{document}

\maketitle

\begin{abstract}
We consider regularized cutting-plane methods to minimize a convex function that is the sum of a large number of component functions.
One important example is the dual problem obtained from Lagrangian relaxation on a decomposable problem.
In this paper, we focus on an incremental variant of the regularized cutting-plane methods, which only evaluates a subset of the component functions in each iteration.
We first consider a limited-memory setup where the method deletes cuts after a finite number of iterations.
The convergence properties of the limited-memory methods are studied under various conditions on regularization.
We then provide numerical experiments where the incremental method is applied to the dual problems derived from large-scale unit commitment problems.
In many settings, the incremental method is able to find a solution of high precision in a shorter time than the non-incremental method.

\end{abstract}

\section{Introduction}\label{sec1}

The focus of this paper is a convex optimization problem whose objective function is the sum of a large number of convex component functions.
One notable example is the dual problem derived from a decomposable optimization problem.

Cutting-plane methods are popular solution methods to solve such a convex optimization problem \cite{Kelley1960,CheneyAndGoldstein1959}.
Given the values and subgradients of the component functions evaluated at the previous points, the methods construct a piecewise affine model which minorizes the objective function.
The minimizer of the model is then used as the next point.
Namely, for each component function, its value and subgradient are evaluated at this new point and the methods compute an affine function which minorizes the component.
This affine function is referred to as a cut and used to update the aforementioned piecewise affine model.
The modified model is then minimized and the above procedure is repeated.
The plain cutting-plane method suffers instability and often regularization is added to the model to mitigate the issue \cite{Briantetal2008}.

In this paper, we consider an incremental variant of the regularized cutting-plane methods in a sense that it only evaluates a subset of the components in each iteration.
The study on the incremental variant of the cutting-plane methods is motivated by the success of other incremental first-order methods such as incremental gradient descent methods and incremental subgradient methods \cite{Nedic2001,Bertsekas2012}.
One of the most notable examples is the training of neural networks with stochastic gradient methods \cite{BertsekasAndTsitsiklis1996}, which is closely related to incremental gradient descent methods.
Empirically it has been observed that incremental methods often converge much faster than non-incremental methods, especially when the iterate is far from the solution.
It is also expected to be advantageous for the cutting-plane method to evaluate only a subset of the components and update the models and the iterate more frequently if the number of components is large.

In a typical approach, the piecewise affine model described above grows unlimitedly as the algorithm proceeds.
That is, the model uses data obtained in all of the previous iterations and its size increases every time a new point is obtained.
When the model gets large, it is of interest to delete some data (i.e.\ delete some cuts) and keep the model size moderate.
Heuristics may be used to select cuts to be deleted, but they are not supported by any convergence results \cite[Remark 9.8]{Bonnansetal2006}.
We study the behaviour of the regularized cutting-plane method in the limited-memory setup under various conditions on the regularization.

Note that similar ideas have been explored in the context of the bundle methods \cite{HiriartUrrutyAndLemarechal1993b}.
The bundle methods also construct a piecewise affine model of the objective function and solve the model with regularization.
Typically the bundle methods support the limited-memory setting thanks to the technique called cut aggregation.
Recently, extensions of the methods to an incremental setup have been studied \cite{VanAckooijetal2018b,EmielAndSagastizabal2010}.
However, the current approaches often require stronger assumptions (e.g.,\ knowledge on the Lipschitz constance of the objective) and further development is to be seen.
See \cite{VanAckooijetal2018b} for further discussion on recent advances in the incremental bundle methods.

The rest of the paper is structured as follows.
Section \ref{sec_incremental_cp_on_convex_problems} introduces the incremental cutting-plane method and studies its convergence property under a limited-memory assumption.
In section \ref{sec_numerical_experiments} we present some computational results.
Finally, in Section \ref{sec_conclusion} conclusions of this paper are discussed.

\section{Cutting-plane methods}
\label{sec_incremental_cp_on_convex_problems}

In this section, we study the following convex optimization problem:
\begin{align}
\min \ & f(x) = \sum_{i = 1}^m f_i(x)
\label{eq_cutting_plane_method_on_convex_problems_problem} \\
\text{s.t.} \ & x \in X, \notag
\end{align}
where we assume the following.
\begin{assumption}
\label{assumption_cp_on_convex_setup}
\
\begin{enumerate}
\item $X \subset \mathbb{R}^n$ is a nonempty, closed and convex set.
\item $f_i: \mathbb{R}^n \rightarrow \mathbb{R}$ is a convex function and is finite and subdifferentiable over $X$ for $i = 1, 2, \ldots, m$.
\item There exists $C > 0$ for which
\[
\|g_i(x)\| \le C, \qquad \forall i, x \in X, g_i(x) \in \partial f_i(x),
\]
where $\partial f_i(x)$ is the subdifferential of $f_i$ at $x$ for each $i$.
\end{enumerate}
\end{assumption}
This is a standard assumption to study incremental subgradient methods \cite{Nedic2001}.
We note that for each $i$, given the convexity of $f_i$, the boundedness of subgradients is equivalent to the $C$-Lipschitz continuity of $f_i$ \cite[Lemma 2.6]{ShalevShwartz2012}.
We write
$f^* = \inf_{x \in X} f(x) \in \mathbb{R} \cup \{-\infty\}$ and $X^* = \{x \in X \mid f(x) = f^*\}$, which may be empty.

\subsection{Algorithm}
\label{sec_limited_memory_cuting_plane_methods}

The cutting-plane method is an iterative solution method which computes candidate points $\{ x_k \}$ to solve \eqref{eq_cutting_plane_method_on_convex_problems_problem} approximately.
In the $k$th iteration, given the previous points $x_0, x_1, \ldots, x_k$, the method computes the next point $x_{k + 1}$ by minimizing a regularized model
\begin{align*}
x_{k + 1}
=
\arg\min \sum_{i = 1}^m \hat{f}_{k, i}(x) + \frac{1}{2 t_k} \| x - x_k \|^2,
\end{align*}
where $\hat{f}_{k, i}$ is a piecewise affine model of the $i$th component
\begin{align}
\hat{f}_{k, i}(x)
= \max_{l = 0, 1, \ldots, k} f_i(x_{k - l}) + \iprod{g_i(x_{k - l})}{(x - x_{k - l})},
\label{eq_model}
\end{align}
$f_i(x_l)$ and $g_i(x_l)$ are the value and any subgradient of $f_i$ at $x_l$ ($l = 0, 1, \ldots, k$) and $t_k$ is a parameter to adjust the strength of the regularization.
We refer to $t_k$ as a step size in a reason described later.
The values and subgradients of the components are evaluated at the new point $x_{k + 1}$ to update the model \eqref{eq_model} and the above process is repeated.

In this paper, we consider an incremental variant of the method in a sense that it only evaluates a subset of the components $f_i$ in each iteration.
We denote the index set of components evaluated in the $k$th iteration by $I_k$.
Since there are $m$ subproblems,
\[
I_k \subset \{1, 2, \ldots, m\}, \qquad \forall k.
\]
We note that the standard cutting-plane method is a special case where $I_k = \{1, 2, \ldots, m\}$ for all $k$.
We refer to this non-incremental method as the full-step method.

Furthermore, it is of interest to bound the memory requirement of the method.
To this end, we consider a method which only uses information obtained in the last $W$ iterations, where $W$ is a prescribed number.
That is, the piecewise affine models only use cuts obtained within the last $W$ iterations.

The algorithm is formally described below.

\begin{algo}
\label{algo_incremental_cp_on_convex}
\
\begin{enumerate}
\item Select $x_0 \in X$, memory size $W \in \mathbb{N}$, step size $\{t_k\}$, evaluation schedule $\{I_k\}$.
\item For $k = 0, 1, \ldots$, repeat the following:
\begin{enumerate}
\item Evaluate $f_i(x_k)$ and $g_i(x_k) \in \partial f_i(x_k)$ for $i \in I_k$.
\item Let
\begin{equation}
x_{k+1} = \arg\min {\breve f}_{k}(x),
\label{eq_algo_iteration}
\end{equation}
where
\begin{align*}
{\breve f}_{k}(x) & = \sum_{i = 1}^m {\hat f}_{k, i}(x) + \frac{1}{2 t_k} \| x - x_k \|^2, \\
{\hat f}_{k, i}(x) & =
\left\{\begin{array}{ll}
\max_{l \in B_{k, i}} f_i(x_{k-l}) + \iprod{g_i(x_{k-l})}{(x - x_{k-l})}, & \text{if } B_{k, i} \not= \emptyset \\
0, & \text{otherwise}
\end{array}\right. \\
B_{k, i} & = \{l \in \mathbb{Z} \mid 0 \le l \le \min\{k, W-1\}, i \in I_{k - l}\}.
\end{align*}
\end{enumerate}
\end{enumerate}
\end{algo}

Note that the above algorithm is well-defined:
For each $k$, $\breve{f}_k$ is a proper closed $(1/2t_k)$-strongly convex function and a minimizer of $\breve{f}_k$ exists and is unique \cite[Theorem 5.25]{Beck2017}.

We assume the following on some analysis below.

\begin{assumption}
\label{assumption_cp_on_convex_algorithm}
\
\begin{enumerate}
\item All components are evaluated at least once in every $W$ iterations:
\[
i \in \bigcup_{l=0}^{W-1} I_{k + l}, \qquad \forall k, i.
\]
\end{enumerate}
\end{assumption}

Under this assumption, each model has at least one cut after $W$ iterations (that is, $B_{k, i} \not= \emptyset$ for any $k \ge W - 1$ and $i$).

\subsection{Convergence analysis}

In this section, we study the behaviour of the algorithm.

The first lemma allows us to write the next point $x_{k + 1}$ in terms of the current point $x_k$ and the subgradients at the previous points.

\begin{lemma}
\label{lemma_cp_on_convex_projection}
Let Assumption \ref{assumption_cp_on_convex_setup} hold.
For $k=0, 1, \ldots$, there exists $\alpha_{k} \in \mathbb{R}^{m \times W}_+$ such that
\[
x_{k+1} = P_X(u_k),
\]
\[
u_k = x_k - t_k \sum_{i = 1}^m \sum_{l \in B_{k, i}} \alpha_{k, i, l} \, g_i(x_{k-l}),
\]
\[
\sum_{l \in B_{k, i}} \alpha_{k, i, l} =
\left\{
\begin{array}{ll}
1 & \qquad \text{if } B_{k, i} \not= \emptyset, \\
0 & \qquad \text{otherwise}, \\
\end{array}
\right.
\quad \forall i,
\]
\[
\alpha_{k, i, l} \ge 0, \quad \forall i, l,
\]
where $P_X$ denotes the (Euclidean) projection onto the set $X$ and we adopt the convention $\sum_\emptyset = 0$.
\end{lemma}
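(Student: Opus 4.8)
The plan is to derive $x_{k+1} = P_X(u_k)$ from the first-order optimality conditions of the subproblem~\eqref{eq_algo_iteration}. First I would view the minimization as taking place over $X$, so that $x_{k+1}$ minimizes $\breve{f}_k + \iota_X$, where $\iota_X$ is the indicator function of $X$. This sum is again proper, closed and $(1/2t_k)$-strongly convex, hence $x_{k+1}$ is its unique minimizer and is characterized by the inclusion
\[
0 \in \partial \breve{f}_k(x_{k+1}) + N_X(x_{k+1}),
\]
where $N_X(\cdot)$ denotes the normal cone to $X$.

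The next step is to evaluate this subdifferential. Since every $f_i$ is finite-valued on $\mathbb{R}^n$, the sum rule for subdifferentials applies with no constraint qualification, and the quadratic term contributes the gradient $\frac{1}{t_k}(x_{k+1} - x_k)$. For each $i$ with $B_{k,i} \neq \emptyset$, the model $\hat{f}_{k,i}$ is a pointwise maximum of the affine cuts whose slopes are the subgradients $g_i(x_{k-l})$; its subdifferential at $x_{k+1}$ is the convex hull of the slopes of the cuts active at $x_{k+1}$. Thus any subgradient of $\hat{f}_{k,i}$ at $x_{k+1}$ can be written as $\sum_{l \in B_{k,i}} \alpha_{k,i,l}\,g_i(x_{k-l})$ with $\alpha_{k,i,l} \ge 0$ and $\sum_{l \in B_{k,i}} \alpha_{k,i,l} = 1$, where $\alpha_{k,i,l} = 0$ on the inactive cuts. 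For each $i$ with $B_{k,i} = \emptyset$ the model is identically zero and contributes the zero subgradient, so we set $\sum_{l} \alpha_{k,i,l} = 0$; this reproduces exactly the normalization stated for $\alpha_k$.

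Collecting these terms, the optimality inclusion becomes the existence of coefficients $\alpha_k \in \mathbb{R}^{m \times W}_+$ of the above form together with a vector $\nu \in N_X(x_{k+1})$ such that
\[
\frac{1}{t_k}(x_{k+1} - x_k) + \sum_{i=1}^m \sum_{l \in B_{k,i}} \alpha_{k,i,l}\, g_i(x_{k-l}) + \nu = 0.
\]
Rearranging and recalling the definition of $u_k$ yields $x_{k+1} = u_k - t_k \nu$, i.e.\ $u_k - x_{k+1} = t_k \nu$. Because $N_X(x_{k+1})$ is a cone and $t_k > 0$, this gives $u_k - x_{k+1} \in N_X(x_{k+1})$, which is precisely the variational characterization of the Euclidean projection; hence $x_{k+1} = P_X(u_k)$.

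I expect the main obstacle to be the computation of the subdifferential of the piecewise-affine model $\hat{f}_{k,i}$ and the bookkeeping that reconciles the active-cut multipliers with the stated normalization of $\alpha_k$, in particular handling the degenerate case $B_{k,i} = \emptyset$ in a uniform way. By comparison, invoking the sum rule and absorbing the normal-cone term into the projection are routine given the finiteness and convexity hypotheses of Assumption~\ref{assumption_cp_on_convex_setup}.
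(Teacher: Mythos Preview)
Your argument is correct and is essentially the paper's own proof, only packaged at a slightly higher level of abstraction: the paper reformulates \eqref{eq_algo_iteration} with auxiliary variables $r_i$, invokes Slater to obtain KKT, and reads off the multipliers $\alpha_{i,l}$ from the Lagrangian, whereas you bypass the reformulation by directly citing the subdifferential formula for a pointwise max of affines together with the sum rule. The normalization $\sum_l \alpha_{k,i,l}=1$ that the paper gets from stationarity in $r$ is exactly what you get from the convex-hull description of $\partial\hat f_{k,i}$, and both routes finish with the same normal-cone inclusion $u_k - x_{k+1}\in N_X(x_{k+1})$ that characterizes the projection. One small wording slip: the sum rule is justified not because ``every $f_i$ is finite-valued on $\mathbb{R}^n$'' (Assumption~\ref{assumption_cp_on_convex_setup} only asserts finiteness on $X$) but because each model $\hat f_{k,i}$, being a finite max of affine functions, is finite and continuous on $\mathbb{R}^n$; this is what you actually use.
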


We observe that the iteration resembles that of the projected (non-incremental) subgradient methods.
In our method, for each component, we compute a convex combination of the subgradients evaluated in the last $W$ iterations (the weight is determined via an optimization problem in the proof below), sum over all of the components, multiply the ``step size'' $t_k$ and make a ``subgradient step'' followed by the projection.

\begin{proof}
Let $J_k$ be the index set of the components with at least one cut
\[
J_k = \{i \in \mathbb{Z}^+ \mid B_{k, i} \not= \emptyset\}.
\]
The minimization problem of ${\breve f}_k(x)$ can be written as
\begin{align*}
\min \ & \sum_{i \in J_k} r_i + \frac{1}{2t_k}\|x - x_k\|^2 \\
\text{s.t.} \
& r_i \ge f_i(x_{k-l}) + \iprod{g_i(x_{k-l})}{(x - x_{k-l})}, \quad \forall i \in J_k, \ l \in B_{k, i} \\
& x \in X, r \in \mathbb{R}^{\lvert J_k \rvert}.
\end{align*}
This optimization problem satisfies the Slater condition \cite[Proposition 5.3.1]{Bertsekas2009} and thus the (unique) optimal solution $(x^*, r^*)$ satisfies
\begin{equation*}
(x^*, r^*) = \arg\min_{x \in X, r \in \mathbb{R}^{\lvert J_k \rvert}} L(x, r, \alpha)
\end{equation*}
where
\begin{align*}
L(x, r, \alpha)
= \sum_{i \in J_k} r_i + \frac{1}{2t_k} \|x-x_k\|^2
- \sum_{i \in J_k} \sum_{l \in B_{k, i}} \alpha_{i, l} \left(r_i - f_i(x_{k-l}) - \iprod{g_i(x_{k-l})}{(x - x_{k-l})}\right)
\end{align*}
is the Lagrangian and $\alpha \ge 0$ is an optimal dual solution.
The optimality condition of $L$ gives
\begin{align*}
0 \in -\partial_{x} L(x^*, r^*, \alpha) + N_{X}(x^*), \quad
0 \in -\partial_{r} L(x^*, r^*, \alpha),
\end{align*}
where $N_{X}(x)$ is the normal cone to $X$ at $x$.
From the second relation, we obtain
\begin{align*}
\sum_{l \in B_{k, i}} \alpha_{i, l}
= 1 \qquad \forall i \in J_k
\end{align*}
while the first relation reads
\begin{align*}
0 \in -\frac{1}{t_k} (x^*-x_k) - \sum_{i \in J_k}\sum_{l \in B_{k, i}}\alpha_{i, l} g_i(x_{k-l}) + N_X(x^*).
\end{align*}
This implies
\begin{align*}
x^* = P_X\left(
  x_{k} - t_k \sum_{i \in J_k} \sum_{l \in B_{k, i}}\alpha_{i, l} g_i(x_{k-l})
\right).
\end{align*}
By relabelling $\alpha$, we obtain the desired relations.
\end{proof}

With the preceding lemma, one can estimate the distance between iterates.

\begin{lemma}
\label{lemma_cp_on_convex_one_step_bound}
Let Assumption \ref{assumption_cp_on_convex_setup} hold.
For any $k$ and $s$ with $k>s$, we have
\[
\|x_k - x_s\| \le \overline{t}_{s, k-1} (k-s)mC,
\]
where $\overline{t}_{s, k-1} = \max \{ t_s, t_{s + 1}, \ldots, t_{k - 1}\}$.
\end{lemma}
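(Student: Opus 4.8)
The plan is to bound the per-step displacement $\|x_{j+1} - x_j\|$ uniformly for $s \le j \le k-1$ and then sum these bounds via the triangle inequality. The key structural input is Lemma \ref{lemma_cp_on_convex_projection}, which expresses each iterate as a projection of a subgradient-type step, together with the nonexpansiveness of the Euclidean projection $P_X$.

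First I would establish that every iterate lies in $X$. This follows by induction: $x_0 \in X$ by the initialization in Algorithm \ref{algo_incremental_cp_on_convex}, and each subsequent $x_{j+1} = P_X(u_j) \in X$ since a projection onto $X$ lands in $X$. Consequently $x_j = P_X(x_j)$, which is the fact I need to turn the single-step estimate into a projection inequality. Applying Lemma \ref{lemma_cp_on_convex_projection} and the nonexpansiveness of $P_X$, I get
\[
\|x_{j+1} - x_j\| = \|P_X(u_j) - P_X(x_j)\| \le \|u_j - x_j\| = t_j \left\| \sum_{i = 1}^m \sum_{l \in B_{j, i}} \alpha_{j, i, l}\, g_i(x_{j - l}) \right\|.
\]

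Next I would bound the norm of the double sum by $mC$. For each fixed $i$, the coefficients $\alpha_{j, i, l}$ are nonnegative and sum to either $1$ or $0$, so the inner sum $\sum_{l \in B_{j, i}} \alpha_{j, i, l}\, g_i(x_{j - l})$ is a convex combination (or the zero vector) of subgradients, each of norm at most $C$ by Assumption \ref{assumption_cp_on_convex_setup}. Hence its norm is at most $C$, and summing the triangle inequality over the $m$ components gives the inner norm $\le mC$. Combined with $t_j \le \overline{t}_{s, k-1}$ for $s \le j \le k-1$, this yields the uniform per-step bound $\|x_{j+1} - x_j\| \le \overline{t}_{s, k-1}\, mC$.

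Finally I would telescope: by the triangle inequality,
\[
\|x_k - x_s\| \le \sum_{j = s}^{k-1} \|x_{j+1} - x_j\| \le (k - s)\, \overline{t}_{s, k-1}\, mC,
\]
which is the claimed estimate. I do not anticipate a genuine obstacle here; the only point requiring a little care is the convex-combination argument that collapses the weighted subgradient sum to the bound $mC$, since one must invoke the normalization $\sum_l \alpha_{j,i,l} \in \{0,1\}$ from Lemma \ref{lemma_cp_on_convex_projection} together with the uniform subgradient bound, rather than naively counting the (variable) number of cuts.
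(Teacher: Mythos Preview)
Your proposal is correct and follows essentially the same route as the paper: bound the single-step displacement via Lemma~\ref{lemma_cp_on_convex_projection} and projection nonexpansiveness to get $\|x_{j+1}-x_j\|\le t_j mC$, then telescope. The only cosmetic difference is that the paper pushes the triangle inequality all the way down to the individual terms and then uses $\sum_l \alpha_{j,i,l}\le 1$, whereas you first recognize each inner sum as a convex combination (or zero) of vectors of norm $\le C$; both arguments yield the same bound.
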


\begin{proof}
Using Lemma \ref{lemma_cp_on_convex_projection} and the nonexpansion property of the projection, for all $k \ge 0$, it follows
\begin{align*}
\|x_{k+1} - x_k\|
& = \|P_X(u_k) - x_k\| \\
& \le \|u_k - x_k\| \\
& = \left\|t_k \sum_{i=1}^{m} \sum_{l \in B_{k, i}} \alpha_{k, i, l} g_i(x_{k-l})\right\| \\
& \le t_k \sum_{i=1}^{m} \sum_{l \in B_{k, i}} \alpha_{k, i, l} \|g_i(x_{k-l})\| \\
& \le t_k \sum_{i=1}^{m} \sum_{l \in B_{k, i}} \alpha_{k, i, l} C \\
& \le t_kmC,
\end{align*}
where we have used $\alpha_{k, i, l} \le 1$ in the last inequality.
Combining the preceding inequality with the triangle inequality, we obtain the desired inequality
\begin{align*}
\|x_k - x_s\|
& = \left\|\sum_{l = 0}^{k - s - 1} (x_{s + l + 1} - x_{s + l})\right\| \\
& \le \sum_{l = 0}^{k - s - 1} \|x_{s + l + 1} - x_{s + l}\| \\
& \le \sum_{l = 0}^{k - s - 1} t_{s + l}mC \\
& \le \overline{t}_{s, k - 1} (k - s) mC.
\end{align*}
\end{proof}

Now we prove a fundamental inequality which plays a key role in the convergence analysis of the method.

\begin{lemma}
\label{lemma_cp_on_convex_fundamental_inequality}
Let Assumption \ref{assumption_cp_on_convex_setup} hold.
For any $y \in X$ and any $k \ge 0$ such that $B_{k, i} \not= \emptyset$ for $i = 1, 2, \ldots, m$, we have
\begin{align*}
\|x_{k+1} - y\|^2
\le \|x_k - y\|^2 - 2 t_k (f(x_k)-f(y)) + 4 t_k \overline{t}_{k-W, k-1}m^2C^2W + t_k^2 m^2 C^2,
\end{align*}
where $\overline{t}_{k-W, k-1} = \max\{t_{k-W}, t_{k-W+1}, \ldots, t_{k-1}\}$ and we define $t_i = t_0$ for $i < 0$ for notational convenience.
\end{lemma}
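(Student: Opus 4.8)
The plan is to mimic the standard analysis of projected subgradient methods, starting from the projection characterization of $x_{k+1}$ in Lemma \ref{lemma_cp_on_convex_projection} and carefully controlling the error arising from the fact that the subgradients are evaluated at stale points $x_{k-l}$ rather than at $x_k$. Writing $d_k = \sum_{i=1}^m \sum_{l \in B_{k,i}} \alpha_{k,i,l}\, g_i(x_{k-l})$, Lemma \ref{lemma_cp_on_convex_projection} gives $x_{k+1} = P_X(x_k - t_k d_k)$. Since $y \in X$ and the projection is nonexpansive with $P_X(y) = y$, I would first write
\[
\|x_{k+1}-y\|^2 \le \|x_k - t_k d_k - y\|^2 = \|x_k - y\|^2 - 2 t_k \iprod{d_k}{(x_k - y)} + t_k^2 \|d_k\|^2.
\]
The term $t_k^2 \|d_k\|^2$ is immediately bounded by $t_k^2 m^2 C^2$ using exactly the estimate $\|d_k\| \le mC$ already established inside the proof of Lemma \ref{lemma_cp_on_convex_one_step_bound} (triangle inequality, the uniform bound $\|g_i\| \le C$, and $\sum_{l \in B_{k,i}} \alpha_{k,i,l} = 1$, which holds because every $B_{k,i}$ is nonempty by hypothesis).

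The heart of the argument is a lower bound on $\iprod{d_k}{(x_k - y)}$ that must recover $f(x_k) - f(y)$ up to a controlled error. For each fixed $i$ and $l \in B_{k,i}$ I would split
\[
\iprod{g_i(x_{k-l})}{(x_k - y)} = \iprod{g_i(x_{k-l})}{(x_{k-l} - y)} + \iprod{g_i(x_{k-l})}{(x_k - x_{k-l})}.
\]
The subgradient inequality for $f_i$ at $x_{k-l}$ gives $\iprod{g_i(x_{k-l})}{(x_{k-l} - y)} \ge f_i(x_{k-l}) - f_i(y)$, while Cauchy--Schwarz together with $\|g_i\| \le C$ bounds the second term below by $-C\|x_k - x_{k-l}\|$. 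To turn $f_i(x_{k-l})$ into $f_i(x_k)$ I would invoke the $C$-Lipschitz continuity of $f_i$ (noted in the text as equivalent to bounded subgradients), giving $f_i(x_{k-l}) \ge f_i(x_k) - C\|x_k - x_{k-l}\|$. Both correction terms are governed by $\|x_k - x_{k-l}\|$, and here Lemma \ref{lemma_cp_on_convex_one_step_bound} applied with $s = k-l$ (valid since $l \ge 1$; the case $l=0$ contributes nothing) yields $\|x_k - x_{k-l}\| \le \overline{t}_{k-l,k-1}\, l\, mC$. Since $l \le W-1$ for every $l \in B_{k,i}$, this is at most $\overline{t}_{k-W,k-1}\, W\, mC$, using that the window $[k-l, k-1]$ lies inside $[k-W, k-1]$ and the convention $t_i = t_0$ for $i<0$.

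Putting these pieces together, for each $i$ and $l \in B_{k,i}$ I obtain
\[
\iprod{g_i(x_{k-l})}{(x_k - y)} \ge f_i(x_k) - f_i(y) - 2\,\overline{t}_{k-W,k-1}\, W\, m\, C^2.
\]
Weighting by $\alpha_{k,i,l}$ and summing over $l \in B_{k,i}$ (the weights sum to one), then summing over $i = 1, \ldots, m$, gives
\[
\iprod{d_k}{(x_k - y)} \ge f(x_k) - f(y) - 2\,\overline{t}_{k-W,k-1}\, W\, m^2\, C^2.
\]
Substituting this and the bound on $t_k^2\|d_k\|^2$ into the expansion above produces exactly the claimed inequality, with the factor $4 t_k$ emerging from the $-2t_k$ prefactor acting on the doubled error. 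The only genuine obstacle is the second paragraph: correctly accounting for the staleness of the subgradients. Both the displacement term in the inner-product split and the replacement of $f_i(x_{k-l})$ by $f_i(x_k)$ generate an error proportional to $\|x_k - x_{k-l}\|$, and the crux is to bound this uniformly over the memory window by $\overline{t}_{k-W,k-1}\, W\, mC$ via the one-step estimate, which is precisely what feeds the middle error term $4 t_k \overline{t}_{k-W,k-1} m^2 C^2 W$.
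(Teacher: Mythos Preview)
Your proposal is correct and follows essentially the same route as the paper: expand $\|x_{k+1}-y\|^2$ via the projection representation of Lemma~\ref{lemma_cp_on_convex_projection}, split $\iprod{g_i(x_{k-l})}{(x_k-y)}$ through the stale point $x_{k-l}$, apply the subgradient inequality and Lipschitz continuity of $f_i$ to get two errors of size $C\|x_k-x_{k-l}\|$, and bound that displacement uniformly by $\overline t_{k-W,k-1}\,W\,mC$ using Lemma~\ref{lemma_cp_on_convex_one_step_bound}. The only cosmetic difference is that the paper bounds $f_i(x_k)-f_i(x_{k-l})$ via a subgradient at $x_k$ rather than invoking Lipschitz continuity directly, which is equivalent.
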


The condition $B_{k, i} \not= \emptyset$ for all $i$ is satisfied for any $k \ge W - 1$ if each component function is evaluated at least once in every $W$ iterations (that is, if Assumption \ref{assumption_cp_on_convex_algorithm} is satisfied).

\begin{proof}
Using Lemma \ref{lemma_cp_on_convex_projection} and the nonexpansion property of the projection, for all $k \ge 0$, it follows
\begin{align*}
& \|x_{k+1} - y\|^2 - \|x_{k} - y\|^2 \\
& \quad = \|P_X(u_k) - y\|^2 - \|x_{k} - y\|^2 \\
& \quad \le \|u_k - y\|^2 - \|x_{k} - y\|^2 \\
& \quad = \left\|x_k - t_k \sum_{i=1}^{m} \sum_{l \in B_{k, i}} \alpha_{k, i, l} g_i(x_{k-l}) - y\right\|^2 - \|x_{k} - y\|^2 \\
& \quad = - 2t_k \sum_{i=1}^{m} \sum_{l \in B_{k, i}} \alpha_{k, i, l} \iprod{g_i(x_{k-l})}{(x_k - y)} + t_k^2 \left\|\sum_{i=1}^{m} \sum_{l \in B_{k, i}}\alpha_{k, i, l} g_i(x_{k-l})\right\|^2.
\end{align*}

We shall bound the two terms in the last line.
Using the property of subgradient, for all $k \ge 0$, $i$ and $l \in B_{k, i}$, we obtain
\begin{align*}
-\iprod{g_i(x_{k-l})}{(x_k-y)}
& = -\iprod{g_i(x_{k-l})}{(x_{k-l}-y)} - \iprod{g_i(x_{k-l})}{(x_k - x_{k-l})} \\
& \le -(f_i(x_{k-l}) - f_i(y)) + \| g_i(x_{k-l}) \| \cdot \| x_k - x_{k-l} \| \\
& \le -(f_i(x_k) - f_i(y)) + (f_i(x_k) - f_i(x_{k-l})) + C \| x_k - x_{k-l} \|.
\end{align*}
The convexity of $f_i$ yields
\[
f_i(x_{k}) - f_i(x_{k-l})
\le \iprod{g_i(x_{k})}{(x_{k} - x_{k-l})}
\le \| g_i(x_{k}) \| \| x_{k} - x_{k-l} \|
\le C \| x_{k} - x_{k-l} \|
\]
where $g_i(x_k)$ is a subgradient of $f_i$ at $x_k$.
Therefore,
\begin{align*}
-\iprod{g_i(x_{k-l})}{(x_k-y)}
& \le -(f_i(x_k) - f_i(y)) + C\|x_{k-l} - x_k\| + C \| x_k - x_{k-l} \| \\
& \le -(f_i(x_k) - f_i(y)) + 2\overline{t}_{k-W, k-1}mC^2W,
\end{align*}
where the last inequality follows from Lemma \ref{lemma_cp_on_convex_one_step_bound}.
Thus, for any $k \ge 0$ such that $B_{k, i} \not= \emptyset$ for $i = 1, 2, \ldots, m$,
\begin{align*}
& - 2t_k \sum_{i=1}^{m} \sum_{l \in B_{k, i}} \alpha_{k, i, l} \iprod{g_i(x_{k-l})}{(x_k - y)} \\
& \quad \le 2t_k \sum_{i=1}^{m} \sum_{l \in B_{k, i}} \alpha_{k, i, l} \left(-(f_i(x_k)-f_i(y))+2\overline{t}_{k-W, k-1}mC^2W\right) \\
& \quad = - 2t_k (f(x_k)-f(y)) + 4t_k\overline{t}_{k-W, k-1}m^2C^2W,
\end{align*}
where the last equality holds since $\sum_{l \in B_{k, i}} \alpha_{k, i, l} = 1$ for all $i$ and $f(x) = \sum_{i = 1}^m f_i(x)$ for any $x \in X$.

Now we shall bound the other term.
With the triangle inequality, for all $k \ge 0$,
\begin{align*}
\left\|\sum_{i=1}^{m} \sum_{l \in B_{k, i}}\alpha_{k, i, l} g_i(x_{k-l})\right\|^2
& \le \left(\sum_{i=1}^{m} \sum_{l \in B_{k, i}}\alpha_{k, i, l} \|g_i(x_{k-l})\|\right)^2 \\
& \le \left(\sum_{i=1}^{m} \sum_{l \in B_{k, i}}\alpha_{k, i, l} C\right)^2 \\
& \le m^2 C^2.
\end{align*}
Combining the two bounds, we obtain the desired relation.
\end{proof}

The preceding lemma is an analogue of Lemma 2.1 from Nedic \cite{Nedic2001}, which is a basis of the convergence analysis of incremental subgradient methods.
Following their analysis of incremental subgradient methods, now we can obtain various convergence properties.
The three propositions below correspond to Proposition 2.1, 2.4 and 2.6 from Nedic \cite{Nedic2001}, respectively.
Proofs are similar to those in the reference and omitted.

\begin{proposition}
\label{prop_grouped_incremental_method_constant_step size}
Let Assumption \ref{assumption_cp_on_convex_setup} and \ref{assumption_cp_on_convex_algorithm} hold.
Assume that step size $\{ t_k \}$ is fixed to a positive constant $t$.
\begin{enumerate}
\item
If $f^* = -\infty$, then
\[
\liminf_{k \rightarrow \infty} f(x_k) = -\infty.
\]
\item
If $f^* > -\infty$, then
\[
\liminf_{k \rightarrow \infty} f(x_k) \le f^* + 2 m^2 t C^2W + \frac{1}{2} m^2 t C^2.
\]
\end{enumerate}
\end{proposition}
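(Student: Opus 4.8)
The plan is to obtain both statements from the fundamental inequality of Lemma~\ref{lemma_cp_on_convex_fundamental_inequality} by a telescoping argument combined with proof by contradiction, mirroring Nedic's Proposition~2.1. The first step is to specialize that inequality to the constant step size. With $t_k = t$ for all $k$ we have $\overline{t}_{k-W,k-1} = t$, and Assumptions~\ref{assumption_cp_on_convex_setup} and \ref{assumption_cp_on_convex_algorithm} guarantee $B_{k,i} \neq \emptyset$ for every $i$ as soon as $k \ge W-1$. Hence for all such $k$ and every $y \in X$,
\[
\|x_{k+1} - y\|^2 \le \|x_k - y\|^2 - 2t\bigl(f(x_k) - f(y)\bigr) + \beta, \qquad \beta := m^2 C^2 t^2 (4W + 1).
\]
The key observation to keep the bookkeeping clean is that $\beta/(2t) = 2 m^2 t C^2 W + \tfrac12 m^2 t C^2$ is exactly the constant appearing in part~(ii).

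For part~(ii), assume toward a contradiction that $\liminf_{k\to\infty} f(x_k) > f^* + \beta/(2t)$. Then there exist $\gamma > 0$ and $K \ge W-1$ with $f(x_k) \ge f^* + \beta/(2t) + \gamma$ for all $k \ge K$. Since $f^* = \inf_{x \in X} f(x)$ is finite, I can also select some $y \in X$ with $f(y) \le f^* + \gamma/2$; note that I invoke the infimum definition rather than a minimizer, because $X^*$ may be empty. For $k \ge K$ this yields $2t\bigl(f(x_k) - f(y)\bigr) \ge \beta + t\gamma$, so the displayed inequality collapses to $\|x_{k+1} - y\|^2 \le \|x_k - y\|^2 - t\gamma$. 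Telescoping from $K$ gives $\|x_{k+1} - y\|^2 \le \|x_K - y\|^2 - (k - K + 1)\,t\gamma$, whose right-hand side tends to $-\infty$, contradicting the nonnegativity of the norm.

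For part~(i), the structure is identical. Assuming $\liminf_{k\to\infty} f(x_k) > -\infty$, there is a finite $M$ and an index $K \ge W-1$ with $f(x_k) \ge M$ for all $k \ge K$. Because $f^* = -\infty$, I can pick $y \in X$ with $f(y) \le M - \beta/(2t) - 1$, so that $2t\bigl(f(x_k) - f(y)\bigr) \ge \beta + 2t$ for $k \ge K$. The same telescoping then produces $\|x_{k+1} - y\|^2 \le \|x_K - y\|^2 - 2t\,(k - K + 1) \to -\infty$, once more a contradiction.

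I do not expect a genuine obstacle here, since the analytic heavy lifting is already contained in Lemma~\ref{lemma_cp_on_convex_fundamental_inequality}. The only points requiring care are that the fundamental inequality is valid only for $k \ge W-1$ (so all telescoping must start beyond that threshold), and that the comparison point $y$ must be drawn from the $\inf$-definition of $f^*$ rather than assumed to be a minimizer, to cover the case where $X^*$ is empty.
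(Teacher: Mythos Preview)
Your proposal is correct and matches the approach the paper intends: the paper explicitly omits this proof, referring to Nedic's Proposition~2.1, and your argument is precisely that standard telescoping-plus-contradiction derivation from Lemma~\ref{lemma_cp_on_convex_fundamental_inequality}, with the care points (starting past $k\ge W-1$, using the infimum rather than a minimizer) handled properly.
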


The above lemma estimates the worst-case suboptimality the method may suffer when the step size is kept constant.
It is counter-intuitive that the worst-case bound gets larger as the memory size $W$ increases.
We speculate that tightening the analysis in Lemma \ref{lemma_cp_on_convex_fundamental_inequality} would yield a better bound, but it is left as future work.

With a diminishing step size rule, the method is capable of solving the problem asymptotically.

\begin{proposition}
\label{prop_grouped_incremental_method_diminishing_step size_liminf}
Let Assumption \ref{assumption_cp_on_convex_setup} and \ref{assumption_cp_on_convex_algorithm} hold.
Assume that step size $\{t_k\}$ is such that
\[
t_k > 0 \ (\forall k), \ t_{k} \ge t_{k+1} \ (\forall k), \ \lim_{k \rightarrow \infty} t_k = 0, \ \sum_{k=0}^\infty t_k = \infty.
\]
Then, we have
\[
\liminf_{k \rightarrow \infty} f(x_k) = f^*.
\]
\end{proposition}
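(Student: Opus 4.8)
The plan is to argue by contradiction, exploiting the fundamental inequality of Lemma~\ref{lemma_cp_on_convex_fundamental_inequality}. First observe that every iterate lies in $X$ (since $x_0 \in X$ and $x_{k+1} = P_X(u_k)$), so $f(x_k) \ge f^*$ for all $k$ and hence $\liminf_{k\to\infty} f(x_k) \ge f^*$ automatically. It therefore suffices to establish the reverse inequality $\liminf_{k\to\infty} f(x_k) \le f^*$. Suppose, for contradiction, that this fails, i.e. $\liminf_{k\to\infty} f(x_k) > f^*$. Then I can choose a point $y \in X$ with $f(y)$ strictly below $\liminf_{k} f(x_k)$ together with a gap $\epsilon > 0$ such that $f(x_k) - f(y) \ge \epsilon$ for all sufficiently large $k$; this is possible because $f^* = \inf_{x\in X} f(x)$ lies strictly below the assumed value of the liminf. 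This set-up is uniform across the two cases $f^* > -\infty$ and $f^* = -\infty$.

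Next I would feed this $y$ into Lemma~\ref{lemma_cp_on_convex_fundamental_inequality}. Under Assumption~\ref{assumption_cp_on_convex_algorithm} the hypothesis $B_{k,i}\neq\emptyset$ holds for every $i$ once $k \ge W-1$, so the inequality
\[
\|x_{k+1} - y\|^2 \le \|x_k - y\|^2 - 2t_k\bigl(f(x_k)-f(y)\bigr) + 4 t_k \overline{t}_{k-W,k-1} m^2 C^2 W + t_k^2 m^2 C^2
\]
is available for all large $k$. The crucial simplification comes from the step-size hypotheses: since $\{t_k\}$ is non-increasing, the window maximum satisfies $\overline{t}_{k-W,k-1} = t_{k-W}$, and since $t_k \to 0$ this tends to $0$ as $k\to\infty$. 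Hence the two error terms can be written as $t_k \beta_k$ with $\beta_k = 4\overline{t}_{k-W,k-1} m^2 C^2 W + t_k m^2 C^2 \to 0$.

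Combining the two facts, once $k$ is large enough that both $f(x_k) - f(y) \ge \epsilon$ and $\beta_k \le \epsilon$ hold (say $k \ge K$), the descent term dominates the error and the bound collapses to $\|x_{k+1}-y\|^2 \le \|x_k - y\|^2 - \epsilon t_k$. Summing this telescoping inequality from $K$ to $N-1$ yields $0 \le \|x_N - y\|^2 \le \|x_K - y\|^2 - \epsilon \sum_{k=K}^{N-1} t_k$, whose right-hand side diverges to $-\infty$ as $N\to\infty$ because $\sum_{k} t_k = \infty$. This contradiction proves $\liminf_{k} f(x_k) \le f^*$, and combined with the trivial reverse bound the proposition follows.

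The hard part will be controlling the error term so that it becomes negligible relative to the descent term $-2t_k(f(x_k)-f(y))$. This is precisely where the non-increasing assumption on $\{t_k\}$ is indispensable: it lets me replace the window maximum $\overline{t}_{k-W,k-1}$ by $t_{k-W}$ and conclude that it vanishes, whereas without monotonicity the window maximum need not go to zero even when $t_k \to 0$, and the summation argument would break. The remaining work is routine bookkeeping with the liminf to extract the comparison point $y$ and the gap $\epsilon$.
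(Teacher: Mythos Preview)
Your proof is correct and follows precisely the approach the paper intends: the paper omits the proof and points to Proposition~2.4 in \cite{Nedic2001}, whose argument is exactly the contradiction-plus-telescoping scheme you describe, driven by the fundamental inequality of Lemma~\ref{lemma_cp_on_convex_fundamental_inequality}. Your handling of the error term via the monotonicity of $\{t_k\}$ (so that $\overline{t}_{k-W,k-1}=t_{k-W}\to 0$) and your uniform treatment of the cases $f^*>-\infty$ and $f^*=-\infty$ are both in line with the standard argument.
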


\begin{proposition}
\label{prop_grouped_incremental_method_diminishing_step size_convergence}
Let Assumption \ref{assumption_cp_on_convex_setup} and \ref{assumption_cp_on_convex_algorithm} hold.
Assume that step size $\{t_k\}$ is such that
\[
t_k > 0 \ (\forall k), \ t_{k} \ge t_{k+1} \ (\forall k), \ \sum_{k=0}^\infty t_k = \infty, \ \sum_{k=0}^\infty t_k^2 < \infty,
\]
and assume that the set of optimal solutions $X^*$ is nonempty.
Then, the sequence $\{x_k\}$ converges to some optimal solution.
\end{proposition}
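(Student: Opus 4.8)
The plan is to invoke the fundamental inequality of Lemma~\ref{lemma_cp_on_convex_fundamental_inequality} and recast the resulting recursion into a quasi-Fej\'er (deterministic supermartingale) framework. Fix an optimal solution $x^* \in X^*$, which exists by assumption. Since $\{t_k\}$ is non-increasing, the maximum over the window collapses to $\overline{t}_{k-W,k-1} = t_{k-W}$, so for every $k \ge W-1$ (where $B_{k,i} \ne \emptyset$ for all $i$ by Assumption~\ref{assumption_cp_on_convex_algorithm}) Lemma~\ref{lemma_cp_on_convex_fundamental_inequality} with $y = x^*$ gives
\[
\|x_{k+1} - x^*\|^2 \le \|x_k - x^*\|^2 - 2 t_k \bigl(f(x_k) - f^*\bigr) + \epsilon_k, \qquad \epsilon_k := 4 t_k t_{k-W} m^2 C^2 W + t_k^2 m^2 C^2,
\]
where $f(x_k) - f^* \ge 0$. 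Using $t_k \le t_{k-W}$ one has $t_k t_{k-W} \le t_{k-W}^2$, and since $\sum_k t_k^2 < \infty$ (the finitely many negative-index terms being harmless under the convention $t_i = t_0$), the error sequence is summable: $\sum_k \epsilon_k < \infty$.

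Next I would apply the standard deterministic lemma stating that if nonnegative sequences satisfy $a_{k+1} \le a_k - b_k + \epsilon_k$ with $\sum_k \epsilon_k < \infty$, then $\{a_k\}$ converges and $\sum_k b_k < \infty$; a telescoping argument on $a_k + \sum_{j \ge k}\epsilon_j$ proves it. Taking $a_k = \|x_k - x^*\|^2$ and $b_k = 2 t_k(f(x_k) - f^*)$ yields two facts: (i) $\|x_k - x^*\|^2$ converges for each fixed $x^* \in X^*$, so $\{x_k\}$ is bounded; and (ii) $\sum_k t_k (f(x_k) - f^*) < \infty$. Because $\sum_k t_k = \infty$ and the summands are nonnegative, (ii) forces $\liminf_{k\to\infty} f(x_k) = f^*$, reproducing Proposition~\ref{prop_grouped_incremental_method_diminishing_step size_liminf}.

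Finally I would upgrade this to convergence of the whole sequence. Choose a subsequence along which $f(x_{k_j}) \to f^*$; by (i) it is bounded, so after passing to a further subsequence $x_{k_j} \to \bar x$ for some $\bar x$, which lies in $X$ as $X$ is closed. The bounded-subgradient hypothesis makes each $f_i$, hence $f$, Lipschitz and in particular continuous, so $f(\bar x) = \lim_j f(x_{k_j}) = f^*$ and therefore $\bar x \in X^*$. Now apply fact (i) with the specific choice $x^* = \bar x$: the sequence $\|x_k - \bar x\|^2$ converges, and since it tends to $0$ along the subsequence, its limit is $0$. Hence $x_k \to \bar x \in X^*$.

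The hard part is precisely this last passage from subsequential to full convergence. It cannot be extracted from a generic optimal solution; the Fej\'er-monotone inequality must be applied at the accumulation point $\bar x$ itself, which is legitimate only after one has established $\bar x \in X^*$. The monotonicity of the step sizes is what keeps the error term summable (through $\overline{t}_{k-W,k-1} = t_{k-W}$), and the restriction of the inequality to $k \ge W-1$ is immaterial, since modifying finitely many initial terms affects neither summability nor the existence of limits.
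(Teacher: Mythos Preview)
Your argument is correct and follows precisely the route the paper points to: the paper omits the proof, citing Proposition~2.6 of Nedi\'c~\cite{Nedic2001}, and your quasi-Fej\'er scheme built on Lemma~\ref{lemma_cp_on_convex_fundamental_inequality}---summable error from $\sum t_k^2<\infty$ and the monotonicity of $\{t_k\}$, the deterministic supermartingale lemma, $\liminf f(x_k)=f^*$ from $\sum t_k=\infty$, and the Fej\'er-monotone upgrade at an optimal accumulation point---is exactly that argument transported to the present fundamental inequality.
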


\section{Numerical experiments}
\label{sec_numerical_experiments}

In this section, the incremental cutting-plane method is applied to the dual problems of unit commitment (UC) problems and the performance is compared with that of the full-step method.
The UC problem is an important optimization problem in the energy industry to compute the optimal operating schedules of power plants given demand over a fixed period.
The formulation of the problem is given in the appendix.
One of the standard approaches is to apply the Lagrangian relaxation to decompose the problem by generators and solve the dual problem with the cutting-plane method \cite{VanAckooijetal2018a}.

We consider UC problems with 200 generators whose data is based on \cite{Borghettietal2002}.
The length of the planning horizon is 48 hours with a time resolution of 1 hour.
The demand data is based on the historical demand data in the UK published by National Grid ESO.\footnote{https://www.nationalgrideso.com/}
We sample 8 different demand data and create test instances based on these demand data.

The incremental cutting-plane method used in this section is the one described in Algorithm \ref{algo_incremental_cp_on_convex}.
For simplicity, we consider the constant step size rule ($t_k = t$ for all $k$) and we do not delete any cuts ($W = \infty$).
When the method is full-step, which evaluates all the component values in each iteration, the objective value at each point is readily available.
In practice, this can be used to monitor the progress.
In our numerical experiments, we only update the iterate $x_k$ when the objective value gets improved.
Namely, if the objective value of the point computed by the model is worse than that of the current point, only the model gets updated and the iterate (i.e.\ the regularization centre) is kept to the same point.
This modification improves the performance of the full-step method and is often used in practice (e.g. \cite{Schulzeetal2017}).

The evaluation schedule $\{I_k\}$ is computed based on permutation.
In our experiments, the number of evaluated components per iteration, denoted by $p$, is fixed to be the same in all iterations.
Namely, $\lvert I_k \rvert = p$ for all $k$.
We consider a sequence of component functions where every $m$ elements are a permutation of $f_1, f_2, \ldots, f_m$
\begin{equation*}
\underbrace{f_{h_1^1}, f_{h_2^1}, \ldots, f_{h_m^1}}_{\text{1st permutation}},
\underbrace{f_{h_1^2}, f_{h_2^2}, \ldots, f_{h_m^2}}_{\text{2nd permutation}},
\ldots.
\end{equation*}
In every iteration, the first $p$ components are removed from the sequence and evaluated.
This loosely follows the idea considered in the study on incremental subgradient methods \cite{Nedic2001}, which corresponds to $p = 1$ where a single component is evaluated per iteration.
Setting $p = m$ we obtain the full-step method which evaluates all of the components in each step.

For later use, we introduce an adjusted step size $\widetilde{t} = mt / p$ where $t$ is the actual step size.
We note that the method with a small value of $p$ makes more steps (i.e.\ solves model \eqref{eq_algo_iteration} more often) than the method with a large value of $p$.
If all of the component functions are affine, the method makes the same progress per component evaluation no matter which $p$ value is used, as long as the adjusted step size $\widetilde{t}$ is the same.
Thus, the adjusted step size $\widetilde{t}$ facilitates the comparison of experiments with different values of $p$.

Two approaches to initialize the cutting-plane method are considered: warmstart and coldstart.
Coldstart is used when there is no prior knowledge of the problem to be solved.
However, if available, warmstart may significantly reduce the computational time.
It is of interest to verify if the incremental update helps in this practical setup or not.
In our numerical experiments, to warmstart the method we first solve the linear programming relaxation (LPR) of the original problem and use the optimal dual value to the LPR as the initial value of the cutting-plane method.
This initialization method was used in \cite{Schulzeetal2017} to warmstart the cutting-plane method.
In the coldstart case, the initial dual value is set to 0.

To observe the effect of the incremental update, the cutting-plane method is applied to the test instances with various numbers of component evaluations per iteration $\lvert I_k \rvert = p$.
Table \ref{tab_result_incremental_ratio} shows the average computational time, the average number of component evaluations and the average number of iterations to find a solution within 0.1\% and 0.05\% tolerance.
All times in this section are wall-clock times.
If the method is warmstarted, the computational time includes the time spent to solve the LPR, which is approximately 5 seconds.
For each value of $p$, the adjusted step size $\widetilde{t}$ is set to $0.01$ (i.e.\ the step size is $t = 0.01 p / m$).
The incremental method does not compute the function value $f(x_k)$ for $k = 0, 1, \ldots$ and thus they are evaluated after the experiments.

\begin{table}[htbp]
\begin{center}
\small
\caption{Performance of the cutting-plane method with various evaluation schedule sizes $p$}
\label{tab_result_incremental_ratio}
\begin{tabular*}{13.0cm}{
@{\extracolsep{\fill}}llrrrrrr}
\toprule
    &      & \multicolumn{3}{l}{tol: 0.1\%} & \multicolumn{3}{l}{0.05\%} \\
\cline{3-5}
\cline{6-8}
initializer & $p / m$ & time & comp. eval. & iter. & time & comp. eval. & iter. \\
\midrule
warmstart & 0.05 &          14.4 &   345.0 &   34.5 &          19.8 &   531.2 &   53.1 \\
    & 0.10 &  \textbf{13.8} &   347.5 &   17.4 &  \textbf{18.9} &   537.5 &   26.9 \\
    & 0.20 &          14.4 &   375.0 &    9.4 &          20.1 &   585.0 &   14.6 \\
    & 0.30 &          15.5 &   420.0 &    7.0 &          20.0 &   607.5 &   10.1 \\
    & 0.40 &          16.7 &   480.0 &    6.0 &          23.3 &   740.0 &    9.2 \\
    & 0.50 &          19.2 &   587.5 &    5.9 &          25.3 &   825.0 &    8.2 \\
    & 1.00 &          22.2 &   700.0 &    3.5 &          30.7 &  1025.0 &    5.1 \\
\midrule
coldstart & 0.05 &          23.5 &   911.2 &   91.1 &          39.0 &  1226.2 &  122.6 \\
    & 0.10 &          20.0 &   910.0 &   45.5 &          32.7 &  1232.5 &   61.6 \\
    & 0.20 &  \textbf{19.4} &   950.0 &   23.8 &  \textbf{29.2} &  1235.0 &   30.9 \\
    & 0.30 &          19.9 &  1005.0 &   16.8 &          29.5 &  1305.0 &   21.8 \\
    & 0.40 &          22.3 &  1110.0 &   13.9 &          31.8 &  1400.0 &   17.5 \\
    & 0.50 &          25.6 &  1237.5 &   12.4 &          36.1 &  1537.5 &   15.4 \\
    & 1.00 &          51.2 &  2075.0 &   10.4 &          66.9 &  2450.0 &   12.2 \\
\bottomrule
\end{tabular*}
\end{center}
\end{table}

As shown in Table \ref{tab_result_incremental_ratio}, for any value of $p$, the warmstarted method solves the problem quicker than the coldstarted method.
Namely, the LPR gives a dual value close to the optimal one, which helps the method to find a solution in a shorter time.

Interestingly, whichever initialization method is used, the incremental method ($p / m < 1$) performs better than the full-step method ($p / m = 1$).
In general, an incremental first-order method is expected to be more efficient than the full-step counterpart when the iterate is far from the solution, but the convergence to the solution of high precision is slower \cite{Bertsekas2012}.
The incremental cutting-plane method, however, successfully finds a solution of high precision (e.g.\ 0.05\%) in a short time.
We note that as the number of component evaluations per iteration $p$ gets smaller, the method tends to find a solution with fewer component evaluations in total.
However, at the same time, the number of iterations and hence the number of solutions of model \eqref{eq_algo_iteration} gets larger.
If $p$ is too small, the time spent to solve \eqref{eq_algo_iteration} becomes significant and the overall computational time increases.
The best value of $p$ balances the number of component evaluations in total and the growth of model solution time.

Table \ref{tab_result_stepsize} shows the performances of the method with various step sizes.
As shown in the table, the performance of the method is subject to the choice of the step size.
When the step size is very small, the numbers of component evaluations required by the incremental method and the full-step method tend to be close.
With a small step size, the regularization in the model \eqref{eq_algo_iteration} becomes strong and the methods cannot make much progress per iteration.
The behaviours of the two methods are similar in this situation.
However, in the setup in Table \ref{tab_result_stepsize}, the incremental method has to solve the model ten times more often than the full-step method per component evaluation.
As a result, it requires longer computational times.
On the other hand, an excessively large step size slows down the method as well.
Although this trend holds both for the incremental method and for the full-step method, the growth of the computational time is more significant in the full-step method.
In other words, the incremental method is more robust and tends to work relatively well even if the step size is set larger than the best value.

\begin{table}[htbp]
\small
\begin{center}
\caption{Performance of the cutting-plane method with various step sizes}
\label{tab_result_stepsize}
\begin{tabular*}{14.5cm}{
@{\extracolsep{\fill}}l@{\hskip0.1cm}c@{\hskip0.1cm}crrrrrr}
\toprule
     &     &       & \multicolumn{3}{l}{tol: 0.1\%} & \multicolumn{3}{l}{0.05\%} \\
\cline{4-6}
\cline{7-9}
initializer & $p / m$ & $\widetilde{t}$ & time &  comp. eval. & iter. &  time &  comp. eval. & iter. \\
\midrule
warmstart & 0.1 & 0.100 &          26.1 &   677.5 &   33.9 &          34.0 &   907.5 &   45.4 \\
          &     & 0.050 &          20.4 &   530.0 &   26.5 &          28.1 &   772.5 &   38.6 \\
          &     & 0.010 &          13.8 &   347.5 &   17.4 &  \textbf{18.9} &   537.5 &   26.9 \\
          &     & 0.005 &  \textbf{13.8} &   360.0 &   18.0 &          22.4 &   690.0 &   34.5 \\
          &     & 0.001 &          27.9 &   940.0 &   47.0 &          77.1 &  2735.0 &  136.8 \\
\midrule
          & 1.0 & 0.100 &          65.0 &  1775.0 &    8.9 &         101.1 &  2650.0 &   13.2 \\
          &     & 0.050 &          46.2 &  1350.0 &    6.8 &          66.0 &  1950.0 &    9.8 \\
          &     & 0.010 &          22.2 &   700.0 &    3.5 &          30.7 &  1025.0 &    5.1 \\
          &     & 0.005 &  \textbf{18.7} &   600.0 &    3.0 &  \textbf{27.7} &   975.0 &    4.9 \\
          &     & 0.001 &          26.6 &   950.0 &    4.8 &          70.5 &  2775.0 &   13.9 \\
\midrule
coldstart & 0.1 & 0.100 &          28.9 &   975.0 &   48.8 &          38.9 &  1195.0 &   59.8 \\
          &     & 0.050 &          24.4 &   900.0 &   45.0 &  \textbf{32.7} &  1095.0 &   54.8 \\
          &     & 0.010 &  \textbf{20.0} &   910.0 &   45.5 &          32.7 &  1232.5 &   61.6 \\
          &     & 0.005 &          25.2 &  1145.0 &   57.2 &          56.9 &  1902.5 &   95.1 \\
          &     & 0.001 &         143.3 &  4507.5 &  225.4 &         416.3 &  8847.5 &  442.4 \\
\midrule
          & 1.0 & 0.100 &         267.9 &  4300.0 &   21.5 &         347.9 &  4850.0 &   24.2 \\
          &     & 0.050 &         186.9 &  3750.0 &   18.8 &         218.8 &  4050.0 &   20.2 \\
          &     & 0.010 &          51.2 &  2075.0 &   10.4 &          66.9 &  2450.0 &   12.2 \\
          &     & 0.005 &  \textbf{35.3} &  1725.0 &    8.6 &  \textbf{56.5} &  2350.0 &   11.8 \\
          &     & 0.001 &          98.5 &  4500.0 &   22.5 &         235.2 &  8875.0 &   44.4 \\
\bottomrule
\end{tabular*}
\end{center}
\end{table}

Figure \ref{fig_obj_value} shows the objective values at the points computed by the method on one of the test instances.
The left and right figures correspond to the warmstart and the coldstart respectively.
In each plot, the full-step method and incremental method are shown in solid and dashed lines, respectively.
The warmstarted method first solves the LPR, which approximately take 5 seconds, while the coldstarted method starts outputting the iterates immediately.
In both cases, the full-step method with step size 0.1 often outputs points whose objective values are out of the range and do not appear in the figures.
%
%
\begin{figure}
    \begin{center}
    \small
\begin{tikzpicture}
  \begin{axis}[
      title={warmstart},
      xlabel={elapse (second)},
      ylabel={obj.\ value},
      xmin=0,
      xmax=20,
      ymin=0.990,
      ymax=1.002,
      restrict y to domain*=0.500:1.006,
      ytick={0.990, 0.994, 0.998, 1.002},
      yticklabels={0.990, 0.994, 0.998, 1.002},
      width=.45\textwidth,
      height=0.25\textheight,
  ]
  \tikzset{
    s0/.style={black},
    s1/.style={red},
    s2/.style={blue},
  };

  \edef\lstStepsizes{0.1,0.01,0.001}
  \pgfplotsinvokeforeach{0,1,2}{
    \pgfmathsetmacro{\stepsize}{{\lstStepsizes}[#1]}
    \edef\temp{\noexpand
    \addplot+ [s#1, mark=x] table [
            x=time,
            y=lb,
            col sep=comma
          ] {./data/plot_obj_vs_time_workspace_v4_200_lpr_instance_1700000_incremental_ratio_1.0_stepsize_\stepsize_version_v1.csv};
    };  
    \temp
    \edef\temp{\noexpand
    \addplot+ [s#1, mark=x, dashed, forget plot] table [
            x=time,
            y=lb,
            col sep=comma
          ] {./data/plot_obj_vs_time_workspace_v4_200_lpr_instance_1700000_incremental_ratio_0.1_stepsize_\stepsize_version_v1.csv};
    };  
    \temp
  }
  \end{axis}
\end{tikzpicture}
\hskip 0.05\textwidth
\begin{tikzpicture}
  \begin{axis}[
      title={coldstart},
      xlabel={elapse (second)},
      ylabel={obj.\ value},
      xmin=0,
      xmax=20,
      ymin=0.8,
      ymax=1.002,
      restrict y to domain*=0.2:1.006,
      legend pos=south east,
      width=.45\textwidth,
      height=0.25\textheight,
  ]
  \addlegendimage{empty legend}
  \tikzset{
    s0/.style={black},
    s1/.style={red},
    s2/.style={blue},
  };

  \edef\lstStepsizes{0.1,0.01,0.001}
  \pgfplotsinvokeforeach{0,1,2}{
    \pgfmathsetmacro{\stepsize}{{\lstStepsizes}[#1]}
    \edef\temp{\noexpand
    \addplot+ [s#1, mark=x] table [
            x=time,
            y=lb,
            col sep=comma
          ] {./data/plot_obj_vs_time_workspace_v4_200_coldstart_instance_1700000_incremental_ratio_1.0_stepsize_\stepsize_version_v1.csv};
    };  
    \temp
    \edef\temp{\noexpand
    \addplot+ [s#1, mark=x, densely dashed, forget plot] table [
            x=time,
            y=lb,
            col sep=comma
          ] {./data/plot_obj_vs_time_workspace_v4_200_coldstart_instance_1700000_incremental_ratio_0.1_stepsize_\stepsize_version_v1.csv};
    };  
    \temp
  }
  \legend{\hspace{-.7cm}stepsize,0.1,0.01,0.001};
  \end{axis}
\end{tikzpicture}
\end{center}
\caption{Objective values of the iterates by the warmstarted (left) and coldstarted (right) method.  Solid and dashed lines correspond to the full-step method and the incremental method respectively.}
\label{fig_obj_value}
\end{figure}

The figure on the left shows that the strength of the regularization (i.e.\ the reciprocal of the step size) has a significant effect on the behaviour of the full-step method, which is plotted in the solid lines.
We note that all of the cases use the same initial point which is close to the optimal solution, computed by the LPR.
However, the full-step method with the weakest regularization (step size 0.1) is quite unstable and it outputs dual values far away from the optimal one.
This is expected behaviour since the model does not have enough cuts in the first few iterations and without suitable regularization the model tends to output points far from the solution, even if it is initialized at a near-optimal point.
When the step size is 0.01, the full-step method still shows instability at first but makes constant progress afterwards.
If the step size decreased to the further smaller value 0.001, the full-step method does not show such unstable behaviour at all but the entire progress becomes significantly slow.
As a result, the full-step method with step size 0.001 takes longer than that of step size 0.01.

The same discussion applies to the warmstarted incremental method.
However, the instability of the incremental method observed with step size 0.1 and 0.01 is drastically improved compared with the full-step method.
The incremental method also shows the unstable behaviour at first but becomes stable more quickly than the full-step method, which leads to the reduction of the overall computational time.
However, the incremental update does not fix the slow progress caused by the too small step size 0.001 and the full-step method and incremental method behave quite similarly.

Similar trends are observed with the coldstart case which is plotted on the right.
We still observe the advantage of the incremental update with the large step sizes (0.1 and 0.01).
Namely the incremental update mitigate the instability of the cutting-plane method.
However, the incremental method with the small step size (0.001) is rather slower than the full-step method.

\section{Conclusion}
\label{sec_conclusion}

In this paper, we considered an incremental variant of regularized cutting-plane methods.
First, the convergence property of the method was studied under the limited-memory assumption.
That is, we considered a setting where cuts were deleted after a finite number of iterations.
By showing the similarity of the incremental cutting-plane method and incremental subgradient methods, the convergence of the method was established.
Then, the method was applied to the dual problems of large-scale unit commitment problems.
In many settings, the incremental method found a solution of high precision in a shorter time than the full-step method.
The incremental method mitigated the instability of the cutting-plane method observed in the first few iterations, which seemed to be one of the factors to account for the superior performance of the incremental method.

\printbibliography

\appendix

\section{Problem formulation}

We follow one of the standard formulations in literature and formulate the following constraints:
\begin{itemize}
\item \textbf{Load balance}: Generators have to meet the all demand in each time period.
\item \textbf{Reserve}: To deal with contingencies, it is required to keep a sufficient amount of back up in each time period, which can be activated quickly.
\item \textbf{Power output bounds}: Each generator's power output has to be within its limit.
\item \textbf{Ramp rate}: Generators can only change their outputs within the ramp rates.
\item \textbf{Minimum up/downtime}: If switched on (off), each generator has to stay on (off) for a given minimum period.
This is to avoid thermal stress in the generators which may cause wear and tear of the turbines.
\end{itemize}

To formulate the model, we use the following notation.

\begin{itemize}
\item{Sets}
\begin{itemize}
\item $G = \{1, 2, \ldots, n_G\}$: set of generators
\item $T = \{1, 2, \ldots, n_T\}$: set of time indices where decisions are taken
\end{itemize}

\item{Parameters}
\begin{itemize}
\item $C^{\mathrm{nl}}_{g}$: no-load cost of generator $g$
\item $C^{\mathrm{mr}}_{g}$: marginal cost of generator $g$
\item $C^{\mathrm{up}}_{g}$: startup cost of generator $g$
\item $P^{\max/\min}_{g}$: maximum/minimum generation limit of generator $g$
\item $P^{\mathrm{ru}/\mathrm{rd}}_{g}$: operating ramp up/down limits of generator $g$
\item $P^{\mathrm{su}/\mathrm{sd}}_{g}$: startup/shutdown ramp limits of generator $g$
\item $T^{\mathrm{u}/\mathrm{d}}_{g}$: minimum uptime/downtime of generator $g$
\item $P^{\mathrm{d}}_{t}$: power demand at time $t$
\item $P^{\mathrm{r}}_{t}$: reserve requirement at time $t$
\end{itemize}

\item{Variables}
\begin{itemize}
\item $\alpha_{gt} \in \{0, 1\}$: 1 if generator $g$ is on in period $t$, and 0 otherwise
\item $\gamma_{gt} \in \{0, 1\}$: 1 if generator $g$ starts up in period $t$, and 0 otherwise
\item $\eta_{gt} \in \{0, 1\}$: 1 if generator $g$ shut down in period $t$, and 0 otherwise
\item $p_{gt} \ge 0$: power output of generator $g$ in period $t$
\end{itemize}
\end{itemize}

The objective is the total cost
\[
\min \sum_{t \in T} \sum_{g \in G}
\left( C^{\mathrm{nl}}_g \alpha_{gt} + C^{\mathrm{mr}}_g p_{gt} +
C^{\mathrm{up}}_g \gamma_{gt}
\right).
\]
This is to be minimized subject to the following constraints.
\begin{itemize}
\item Load balance
\begin{equation*}
\sum_{g \in G} p_{gt} \ge P^{\mathrm{d}}_{t}
\qquad t \in T
\label{eq:uc_first_constraint}
\end{equation*}

\item Reserve
\begin{equation*}
\sum_{g \in G} (P^{\max}_{g} \alpha_{gt} - p_{gt})
\ge P^{\mathrm{r}}_t
\qquad t \in T
\end{equation*}

\item Power output bounds
\begin{equation*}
P^{\min}_{g} \alpha_{gt} \le p_{gt} \le P^{\max}_{g} \alpha_{gt}
\qquad g \in G, t \in T
\end{equation*}

\item Ramp rate
\begin{equation*}
p_{gt} - p_{g \, t-1} \le P^{\mathrm{ru}}_g \alpha_{g \, t-1}
+ P^{\mathrm{su}}_g \gamma_{gt}
\qquad g \in G, t \in T \backslash \{1\}
\end{equation*}
\begin{equation*}
p_{g \, t-1} - p_{gt} \le P^{\mathrm{rd}}_g \alpha_{gt}
+ P^{\mathrm{sd}}_g \eta_{gt}
\qquad g \in G, t \in T \backslash \{1\}
\end{equation*}

\item Minimum up/downtime
\begin{equation*}
\sum_{i=\max\{t-T^\mathrm{u}_g+1, 1\}}^t \gamma_{gi} \le \alpha_{gt}
\qquad g \in G, t \in T
\end{equation*}
\begin{equation*}
\sum_{i=\max\{t-T^\mathrm{u}_g+1, 1\}}^t \eta_{gi} \le 1 - \alpha_{gt}
\qquad g \in G, t \in T
\end{equation*}

\item Polyhedral/Switching constraints (to enforce binaries to work as we expect)
\begin{equation*}
\alpha_{gt} - \alpha_{g \, t-1} = \gamma_{gt} - \eta_{gt}
\qquad g \in G, t \in T
\end{equation*}
\begin{equation*}
1 \ge \gamma_{gt} + \eta_{gt}
\qquad g \in G, t \in T
\label{eq:uc_last_constraint}
\end{equation*}
\end{itemize}


\end{document}